\newcommand{\dd}{\mathrm{d}}
\newcommand{\E}{\mathbb{E}}
\newcommand{\R}{\mathbb{R}}
\newcommand{\p}[1]{\mathbb{P}\left( #1 \right)}
\newcommand{\scal}[2]{\left\langle #1, #2 \right\rangle}
\DeclareMathOperator{\vol}{vol}
\newtheorem{theorem}{Theorem}
\newtheorem{lemma}[theorem]{Lemma}
\theoremstyle{remark}
\theoremstyle{definition}
\begin{document}

\begin{center}
\begin{large}
ON A CONVEXITY PROPERTY OF SECTIONS OF THE CROSS-POLYTOPE \\
\end{large}
\vspace{1em}
Piotr Nayar and Tomasz Tkocz
\end{center}

\bigskip

\begin{footnotesize}
\begin{center}
\parbox{0.75\textwidth}{
\noindent\textbf{Abstract.}
We establish the log-concavity of the volume of central sections of dilations of the cross-polytope (the strong B-inequality for the cross-polytope and Lebesgue measure restricted to an arbitrary subspace).
}
\end{center}

\bigskip

\noindent {\em 2010 Mathematics Subject Classification.} Primary 52A40; Secondary 60E15

\noindent {\em Key words.} cross-polytope, volume of sections, logarithmic Brunn-Minkowski inequality
\end{footnotesize}

\bigskip

\section{Introduction}

The conjectured logarithmic Brunn-Minkowski inequality posed by B\"or\"oczky, Lutwak, Yang and Zhang in \cite{BLYZ} can be equivalently stated as the following property of sections of the  cube $B_1^n = [-1,1]^n$: for every subspace $H$ of $\R^n$ the function
\[
(t_1,\ldots,t_n) \mapsto \vol_H\left( \text{diag}(e^{t_1},\ldots,e^{t_n})B_\infty^n  \cap H \right)
\]
is log-concave on $\R^n$ (see the papers by Saroglou \cite{Sar1} and \cite{Sar2} for a similar and other reformulations). Here $\text{diag}(e^{t_1},\ldots,e^{t_n})$ denotes as usual the $n\times n$ diagonal matrix with $e^{t_i}$ on the diagonal and $\vol_H$ denotes Lebesgue measure on $H$. In this note, we show that such a property holds for sections of the cross-polytope $B_1^n = \{x \in \R^n, \ \sum_{i=1}^n |x_i| \leq 1\}$.

\begin{theorem}\label{thm:Bineql1}
Let $H$ be a subspace of $\R^n$. Then the function
\[
(t_1,\ldots,t_n) \mapsto \vol_H\left(\mathrm{diag}(e^{t_1},\ldots,e^{t_n})B_1^n \cap H \right)
\]
is log-concave on $\R^n$.
\end{theorem}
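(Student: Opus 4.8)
The plan is to pass from volumes to a smooth integral and then to a local (second-order) criterion. Write $N_t(y)=\sum_{i=1}^n e^{-t_i}|y_i|$, which is a norm on $H$ whose unit ball is exactly $\mathrm{diag}(e^{t_1},\dots,e^{t_n})B_1^n\cap H$, and set $d=\dim H$. The standard gauge identity $\int_H e^{-N_t(y)}\,\dd y = d!\,\vol_H(\{y\in H:N_t(y)\le1\})$ gives
\[
\vol_H\!\left(\mathrm{diag}(e^{t_1},\dots,e^{t_n})B_1^n\cap H\right)=\frac{1}{d!}\int_H e^{-\sum_i e^{-t_i}|y_i|}\,\dd y ,
\]
so it suffices to prove that $g(t)=\int_H e^{-\sum_i e^{-t_i}|y_i|}\,\dd y$ is log-concave. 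Since $g$ is smooth, I would differentiate $\log g$ twice. Setting $X_j=e^{-t_j}|y_j|$ and letting $\pi_t$ be the probability measure on $H$ proportional to $e^{-\sum_i X_i}\,\dd y$, the identities $\partial_{t_k}X_j=-\delta_{jk}X_j$ and $\partial_{t_k}\big(\sum_iX_i\big)=-X_k$ exhibit $g$ in exponential-family form and yield, after a short computation,
\[
\partial_{t_j}\partial_{t_k}\log g=\mathrm{Cov}_{\pi_t}(X_j,X_k)-\delta_{jk}\,\E_{\pi_t}X_j .
\]
Thus log-concavity is equivalent to the matrix inequality $\mathrm{Cov}_{\pi_t}(X)\preceq\mathrm{diag}(\E_{\pi_t}X_1,\dots,\E_{\pi_t}X_n)$ at every $t$.

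To remove the parameter, substitute $z=\mathrm{diag}(e^{-t_1},\dots,e^{-t_n})y$: this maps $\pi_t$ to the probability measure $\rho_K$ on the subspace $K=\mathrm{diag}(e^{-t_1},\dots,e^{-t_n})H$ with density proportional to $e^{-\|z\|_1}$, and it turns each $X_j$ into $|z_j|$. The entire statement therefore reduces to the following clean assertion, uniform over subspaces (call it $(\star)$): for every subspace $K\subseteq\R^n$, the random vector $(|z_1|,\dots,|z_n|)$ under $\rho_K$ satisfies
\[
\mathrm{Cov}_{\rho_K}\!\left(|z_1|,\dots,|z_n|\right)\preceq\mathrm{diag}\!\left(\E_{\rho_K}|z_1|,\dots,\E_{\rho_K}|z_n|\right).
\]
One checks that $(\star)$ holds with equality for $K=\R^n$ (there the $|z_j|$ are independent $\mathrm{Exp}(1)$), which both fixes the normalization and shows the bound is sharp.

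For $(\star)$ I would exploit the homogeneity of $\|\cdot\|_1$. Writing $z=r\omega$ with $r=\|z\|_1$ and $\omega$ on the cross-polytope boundary of $K$, the radius and direction decouple: under $\rho_K$ one has $r\sim\Gamma(d,1)$ independent of $\omega\sim\nu_K$, the cone measure of $K$, with $d=\dim K$. Testing $(\star)$ against $\theta\in\R^n$ and using $\E r=d$, $\E r^2=d(d+1)$, and the almost-sure identity $\sum_j|\omega_j|=1$, the inequality becomes, after dividing by $d$,
\[
d\cdot\mathrm{Var}_{\nu_K}\!\Big(\textstyle\sum_j\theta_j|\omega_j|\Big)\le\E_{\nu_K}\Big[\textstyle\sum_j\theta_j^2|\omega_j|-\big(\sum_j\theta_j|\omega_j|\big)^2\Big].
\]
The right-hand side is the $\nu_K$-average of the variance of $\theta$ against the probability weights $|\omega_j|$, and is pointwise nonnegative by Cauchy--Schwarz; this disposes of the ``diagonal'' part of the inequality for free. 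The crux is the dimensional factor $d$ on the left. I would establish this variance bound for the cone measure by an integration-by-parts (divergence) argument on the boundary of the cross-polytope of $K$, where the dimension $d$ enters through the divergence theorem, or alternatively by induction on the codimension of $K$, peeling off one linear constraint at a time starting from the full-space equality case. Controlling the cone-measure variance with the sharp constant $1/d$ is the main obstacle, and is precisely where the special $\ell_1$ structure---the piecewise-linear potential and the exact $\Gamma(d,1)$ radial law---must be used decisively.
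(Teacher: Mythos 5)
Your chain of reductions is correct as far as it goes: the gauge identity, the Hessian computation $\partial_{t_j}\partial_{t_k}\log g=\mathrm{Cov}_{\pi_t}(X_j,X_k)-\delta_{jk}\,\E_{\pi_t}X_j$, the rescaling to a fixed subspace $K$, the polar factorization $r\sim\Gamma(d,1)$ independent of the cone measure, and the Cauchy--Schwarz observation that the right-hand side of your final inequality is pointwise nonnegative. But the proof stops exactly where the theorem begins. The statement $(\star)$, equivalently the bound
\[
d\cdot\mathrm{Var}_{\nu_K}\Big(\textstyle\sum_j\theta_j|\omega_j|\Big)\le\E_{\nu_K}\Big[\textstyle\sum_j\theta_j^2|\omega_j|-\big(\sum_j\theta_j|\omega_j|\big)^2\Big],
\]
is never proved: you offer two unexecuted strategies (a divergence-theorem argument, or induction on codimension) and you yourself flag this bound as ``the main obstacle.'' The difficulty is not a routine missing detail, because your own reduction shows that $(\star)$ is not a lemma en route to the theorem but is \emph{equivalent} to it: $g$ is smooth, log-concavity of a smooth positive function is exactly negative semidefiniteness of the log-Hessian at every point, and as $t$ ranges over $\R^n$ and $H$ over subspaces, $K=\mathrm{diag}(e^{-t_1},\dots,e^{-t_n})H$ ranges over all subspaces of $\R^n$. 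So what you have produced is a clean infinitesimal reformulation of Theorem \ref{thm:Bineql1}, with all of its substance intact. Moreover, neither suggested strategy is obviously viable: cone measures of cross-polytope sections have no evident inductive structure under adding one linear constraint, and it is unclear what vector field would, upon integration by parts, produce the dimensional factor $d$ with the sharp constant.

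For contrast, the point where your argument stalls is exactly where the paper deploys its key mechanism, which has no counterpart in your proposal. The paper writes the symmetric exponential as a Gaussian mixture ($X_i\sim\sqrt{2Y_i}\,G_i$ with $Y_i$ exponential), which converts the section volume into an explicit constant times $\E\big[\det\big(\sum_i a_i^2Y_iv_iv_i^T\big)^{-1/2}\big]$ for suitable vectors $v_i$; then, since $\det$ of a nonnegative combination of rank-one matrices is a polynomial with nonnegative coefficients (mixed discriminants), the function $t\mapsto\log\det\big(\sum_i e^{t_i}v_iv_i^T\big)$ is convex by H\"older, and after the substitution $y_i=e^{s_i}$ the Pr\'ekopa--Leindler inequality integrates out the mixing variables. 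To complete your route you would have to prove $(\star)$ outright, which appears to require an input of comparable depth to this determinant argument; as written, the proposal has a genuine gap at its central step.
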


In other words, the so-called strong B-inequality holds for $B_1^n$ and the (singular) measure being Lebesgue measure restricted to an arbitrary subspace of $\R^n$ (see the pioneering work \cite{CFM} and see \cite{Sar2} for connections to the logarithmic Brunn-Minkowski inequality). We shall present in the sequel a simple example of a symmetric log-concave measure for which the strong B-property fails. Cordero-Erausquin and Rotem have recently analysed in detail the strong B-property for centred Gaussian measures providing further examples of such measures (see \cite{CR}).

It can be checked directly (and will also be clear from our proof) that the same holds true when $B_1^n$ is replaced with $B_2^n$. We conjecture that the above theorem in fact holds for any $B_p^n$ ball put in place of $B_1^n$, $p > 1$.

\section{Proofs}

\subsection{Auxiliary results}

The heart of our argument is the following probabilistic formula for volume of sections of the cross-polytope.

\begin{lemma}\label{lm:volformula}
Let $H$ be a codimension $k$ subspace of $\R^n$. Let $u_1,\ldots,u_k$ be an orthonormal basis of the orthogonal complement of $H$ and let $v_1,\ldots,v_n$ be the column vectors of the $k\times n$ matrix formed by taking $u_1,\ldots, u_k$ as its rows. Then for any positive numbers $a_1,\ldots,a_n$ we have
\[
\vol_{H}\left( \mathrm{diag}(a_1,\ldots,a_n)B_1^n \cap H \right) = \frac{2^{n-k}}{(n-k)!\cdot \pi^{k/2}}\left(\prod_{j=1}^n a_j\right)\E\left[\frac{1}{\sqrt{\det\left(\sum_{j=1}^n a_j^{2}Y_j v_jv_j^T\right)}}\right],
\]
where $Y_1,\ldots,Y_n$ are i.i.d. standard one sided exponential random variables.
\end{lemma}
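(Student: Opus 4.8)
The plan is to reduce the whole identity to a Gaussian computation by successively linearising the section volume. First I would record the elementary fact that for a symmetric convex body $K$ of dimension $m$ with gauge $\|\cdot\|_K$ one has $\vol(K) = \frac{1}{m!}\int e^{-\|x\|_K}\dd x$; integrating in polar coordinates reduces this to $\Gamma(m)$. Applying it inside $H$ (of dimension $m=n-k$), where the gauge of $\mathrm{diag}(a_1,\dots,a_n)B_1^n$ restricts to the genuine norm $x\mapsto \sum_j |x_j|/a_j$ on $H$ (a norm since $B_1^n\cap H$ is bounded), turns the left-hand side into $\frac{1}{(n-k)!}\int_H e^{-\sum_j |x_j|/a_j}\,\dd\vol_{n-k}(x)$. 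Writing $b_j = 1/a_j$, the problem becomes the evaluation of $I:=\int_H e^{-\sum_j b_j|x_j|}\,\dd\vol_{n-k}$.

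Next I would pass from this subspace integral to a full-space integral through the projection $U\colon\R^n\to\R^k$ whose rows are $u_1,\dots,u_k$. Since $UU^T = I_k$, Lebesgue measure disintegrates as $\int_{\R^n} f = \int_{\R^k}\big(\int_{U^{-1}(w)}f\,\dd\vol_{n-k}\big)\dd w$ with unit coarea factor, and $H = U^{-1}(0)$. Thus $I = g(0)$ for $g(w) = \int_{U^{-1}(w)} e^{-\sum_j b_j|x_j|}\,\dd\vol_{n-k}$, and Fourier inversion on $\R^k$ gives $I = (2\pi)^{-k}\int_{\R^k}\hat g(\xi)\,\dd\xi$. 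The transform is computed by tensorisation: since $\hat g(\xi) = \int_{\R^n} e^{-\sum_j b_j|x_j|}e^{-i\scal{U^T\xi}{x}}\,\dd x$ and $(U^T\xi)_j = \scal{v_j}{\xi}$, the integral factorises over coordinates, each one-dimensional factor being the Fourier transform of a Laplace density, giving $\hat g(\xi) = \prod_{j=1}^n \frac{2b_j}{b_j^2 + \scal{v_j}{\xi}^2}$.

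It then remains to integrate this product over $\R^k$ and recognise the exponential expectation. I would linearise each factor via $\frac{1}{b_j^2+\scal{v_j}{\xi}^2} = \int_0^\infty e^{-y_j(b_j^2 + \scal{v_j}{\xi}^2)}\,\dd y_j$, tensorise, and exchange the order of integration (justified by Tonelli, every integrand being nonnegative). The inner $\xi$-integral becomes the Gaussian integral $\int_{\R^k} e^{-\scal{\xi}{M\xi}}\,\dd\xi = \pi^{k/2}(\det M)^{-1/2}$ with $M = \sum_j y_j v_j v_j^T$, positive definite almost everywhere because the $v_j$ span $\R^k$. Finally the substitution $y_j = a_j^2 s_j$ converts $\int_{[0,\infty)^n}e^{-\sum_j b_j^2 y_j}(\det M)^{-1/2}\,\dd y$ into $(\prod_j a_j)^2\,\E[(\det\sum_j a_j^2 Y_j v_j v_j^T)^{-1/2}]$ for i.i.d.\ standard exponentials $Y_j$; collecting the constants $\frac{1}{(n-k)!}\cdot\frac{2^n}{(2\pi)^k}\cdot\pi^{k/2}\cdot \prod_j b_j \cdot (\prod_j a_j)^2$ yields exactly the claimed prefactor $\frac{2^{n-k}}{(n-k)!\,\pi^{k/2}}\prod_j a_j$.

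I expect the main obstacle to be the rigorous justification of the Fourier-inversion/slicing step, namely that $g$ is continuous with $\hat g\in L^1(\R^k)$, so that $g(0) = (2\pi)^{-k}\int\hat g$ holds pointwise. Integrability of $\hat g$ amounts to finiteness of $\int_{\R^k}\prod_j (b_j^2+\scal{v_j}{\xi}^2)^{-1}\,\dd\xi$, which via the $y$-representation equals a constant times the exponential expectation above; its finiteness follows because $\det M$ is a degree-$k$ homogeneous polynomial whose Cauchy--Binet expansion makes $(\det M)^{-1/2}$ locally integrable near the coordinate subspaces. Alternatively one sidesteps the delicate convergence by carrying the nonnegative $y$-integral representation from the outset and invoking Tonelli throughout, the boundedness of $B_1^n\cap H$ guaranteeing $I<\infty$, which retroactively certifies every interchange.
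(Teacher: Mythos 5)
Your proposal is correct, and it takes a genuinely different route from the paper's. The paper starts from the slab-limit representation $(n-k)!\cdot\vol_{n-k}(\,\cdot\,\cap H)=\lim_{\varepsilon\to 0}\varepsilon^{-k}\int_{H(\varepsilon)}e^{-N(x)}\,\dd x$, interprets the right-hand side as a small-slab probability for a vector of i.i.d.\ two-sided exponentials, writes each exponential as a Gaussian mixture $X_i\sim\sqrt{2Y_i}\,G_i$, and extracts the limit from the asymptotics $\p{G_V\in\varepsilon K}=\varepsilon^k(2\pi)^{-k/2}\vol_k(K)+o(\varepsilon^k)$, finishing with the volume of an intersection of $k$ strips. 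You instead work with exact identities throughout: the gauge (cone) formula inside $H$, disintegration of Lebesgue measure along the orthogonal projection $U$, pointwise Fourier inversion at $0$ turning the Laplace densities into Lorentzian factors $2b_j/(b_j^2+\scal{v_j}{\xi}^2)$, the Schwinger representation $\frac{1}{b^2+t^2}=\int_0^\infty e^{-y(b^2+t^2)}\,\dd y$, and a Gaussian integral in $\xi$; your bookkeeping of constants is right and does collapse to $\frac{2^{n-k}}{(n-k)!\,\pi^{k/2}}\prod_j a_j$. Your Schwinger step is precisely the Fourier-dual of the paper's Gaussian-mixture step, so the two arguments mirror each other; what yours buys is the elimination of all limiting arguments (no $o(\varepsilon^k)$ asymptotics, no exchange of a limit with $\E_R$), at the price of verifying the hypotheses of pointwise inversion. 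It is also closer in spirit to the paper's proof of its second formula (Lemma \ref{lm:volformula-2}), which likewise starts from the gauge formula and an explicit mixture identity, but parametrises by the subspace, whereas you work on the $k$-dimensional quotient.

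Two loose ends, both minor. First, you flag but do not prove the continuity of $g$ at $0$; it follows by dominated convergence: $U^{-1}(w)=H+U^Tw$, and with $\phi(x)=\sum_j b_j|x_j|$ the triangle inequality gives $e^{-\phi(h+U^Tw)}\le e^{\phi(U^Tw)}e^{-\phi(h)}$, where $e^{-\phi}$ is integrable on $H$ because $\phi$ restricts to a norm there. Second, your closing remark that Tonelli ``retroactively certifies every interchange'' overstates the case: Tonelli handles the $(y,\xi)$ exchanges, but the identity $g(0)=(2\pi)^{-k}\int\hat g$ is not an interchange of nonnegative integrals and genuinely requires $g\in L^1$, $\hat g\in L^1$, and continuity at $0$. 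All three hold here: $\int g=\prod_j 2/b_j$, continuity as above, and $\int\hat g<\infty$ by your Cauchy--Binet lower bound $\det\bigl(\textstyle\sum_j y_jv_jv_j^T\bigr)\ge c_{S_0}\prod_{j\in S_0}y_j$ for some $k$-subset $S_0$ with $c_{S_0}>0$, combined with $\E\,Y^{-1/2}=\Gamma(1/2)<\infty$. With these two points written out, your argument is complete.
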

\begin{proof}
The starting point is a well-known integral representation for volumes of sections: for an even, homogeneous and continuous function $N\colon \R^n \to [0,\infty)$ vanishing only at the origin and $p > 0$ we have
\[
\Gamma(1+(n-k)/p)\vol_{n-k}(\{x \in \R^n, \ N(x) \leq 1\} \cap H) = \lim_{\varepsilon \to 0} \frac{1}{\varepsilon^k} \int_{H(\epsilon)} e^{-N(x)^p} \dd\!\vol_n(x),
\]
where $H$ is, as in the assumptions of the lemma, a codimension $k$ subspace of $\R^n$ whose orthogonal complement has an orthonormal basis $u_1,\ldots,u_k$ and 
\[
H(\epsilon) = \{x \in \R^n, \ |\scal{x}{u_j}| \leq \varepsilon/2, \ j = 1,\ldots,k\}.
\]
This fact was probably first used in \cite{MP} and in this generality appeared for instance in \cite{Bar} (Lemma 21). Its proof is based on Fubini's and Lebesgue's dominated convergence theorems. Using it for $p = 1$ and $N(x) = \sum a_i^{-1}|x_i|$, we get
\[
(n-k)!\cdot\textrm{vol}_{H}\left( \mathrm{diag}(a_1,\ldots,a_n)B_1^n \cap H \right) = \lim_{\varepsilon \to 0} \frac{1}{\varepsilon^k} \int_{H(\varepsilon)} e^{-\sum a_i^{-1}|x_i|} \dd x.
\]
Let $X_1,\ldots,X_n$ be i.i.d. standard two-sided exponential random variables, that is with density $\frac{1}{2}e^{-|x|}$. Then the vector $(a_1X_1,\ldots,a_nX_n)$ has the density $\frac{1}{2^n\prod a_i}\exp(-\sum a_i^{-1}|x_i|)$, so
\begin{align*}
&(n-k)!\cdot\textrm{vol}_{H}\left( \mathrm{diag}(a_1,\ldots,a_n)B_1^n \cap H \right) \\
&\qquad= 2^n\left(\prod a_i\right)\lim_{\varepsilon \to 0} \varepsilon^{-k} \p{ (a_1X_1,\ldots,a_nX_n) \in H(\varepsilon) } \\
&\qquad= 2^n\left(\prod a_i\right) \lim_{\varepsilon \to 0} \varepsilon^{-k} \p{ |\textstyle\sum_{i=1}^n a_iX_iu_{j,i}| \leq \varepsilon/2, \ j = 1,\ldots,k}.
\end{align*}
Let us compute the probability above and then the limit. Recall the classical fact that the $X_i$ are Gaussian mixtures (see also \cite{ENT}). More preciesly, each has the same distribution as the product $R_i\cdot G_i$ where the $G_i$ are standard Gaussian random variables and $R_i$ are i.i.d. positive random variables distributed as $\sqrt{2Y_i}$ with $Y_i$ being i.i.d. standard one-sided exponentials (see a remark following Lemma 23 in \cite{ENT}). If we condition on the $R_i$ and introduce vectors $\tilde u_j = [a_iR_iu_{j,i}]_{i=1}^n$ we thus get
\[
\p{ |\textstyle\sum_{i=1}^n a_iX_iu_{j,i}| \leq \varepsilon/2, \ j = 1,\ldots,k} = \p{ |\scal{G}{\tilde u_j}| \leq \varepsilon/2, \ j = 1,\ldots,k },
\]
where $G = (G_1,\ldots,G_n)$ is a standard Gaussian random vector. Let $V$ be the subspace spanned by $\tilde u_1, \ldots, \tilde u_k$ and $P_V$ the projection onto $V$.  Then $G_V = P_VG$ is a standard Gaussian random vector on $V$. The above probability thus equals $\p{G_V \in \varepsilon K}$, where $K$ is the subset of $V$ given by $K = \{x \in \R^n \cap V, \ |\scal{x}{\tilde u_j}| \leq 1/2, \ j = 1,\ldots,k\}$, therefore it equals
\[
\p{ |\scal{G}{\tilde u_j}| \leq \varepsilon/2, \ j = 1,\ldots,k } = \p{G_V \in \varepsilon K} = \varepsilon^k(2\pi)^{-k/2}\vol_k(K) + o(\varepsilon^k).
\]
We plug this back, use Lebesgue's dominated convergence theorem (notice that the function $\varepsilon^{-k}\p{G_V \in \varepsilon K}$ is majorised by $(2\pi)^{-k/2}\vol_k(K)$) and obtain
\begin{align*}
(n-k)!\cdot\vol_{n-k}\left(\{x \in \R^n, \ \sum a_i|x_i| \leq 1\} \cap H \right) &= 2^n\left(\prod a_i\right)\lim_{\varepsilon \to 0} \varepsilon^{-k} \E_R\p{ G_V \in \varepsilon K} \\
&= 2^n(2\pi)^{-k/2}\left(\prod a_i\right)\E_R\vol_k(K).
\end{align*}
We are almost done. It remains to recall an elementary fact that an intersection of exactly $n$ strips in $\R^n$, say $\bigcap_{j=1}^n\{x \in \R^n, \ |\scal{x}{v_j}| \leq 1/2\}$ is an image of the cube $[-1/2,1/2]^n$ under the linear map $(V^T)^{-1}$, where $V$ is the matrix whose columns are the $v_j$ (that is $V$ maps the $e_j$ onto $v_j$). Therefore the $n$-volume of the intersection is $\frac{1}{\det(V)}$. In other words, the volume is the reciprocal of the volume of the paralleletope $\{\sum t_iv_i, \ t_1,\ldots,t_n \in [0,1]\}$. Thus,
\[
\vol_{k}(K) = \frac{1}{\sqrt{\det(\tilde U^T \tilde U)}},
\]
where $\tilde U$ is the $n\times k$ matrix whose columns are the $\tilde u_j$. Noticing that the rows of $\tilde U$ are the vectors $a_{i}R_iv_i$ finishes the proof, since then
\[
\frac{1}{\sqrt{\det(\tilde U^T \tilde U)}} = \frac{1}{\sqrt{\det(\sum a_iR_i^2v_iv_i^T})}
\]
and as mentioned earlier $R_i$ has the same distribution as $\sqrt{2Y_i}$.
\end{proof}

We need the following standard lemma, whose proof can be found for example in \cite{Bap} (see Lemma 1 and Lemma 2 (vi) therein).

\begin{lemma}\label{lm:detchaos}
Let $A_1,\ldots,A_n$ be $k \times k$ real symmetric positive semidefinite matrices. Then the function
\[
(x_1,\ldots,x_n) \mapsto \det \left(\sum_{i=1}^n x_i A_i  \right)
\]
is of the form
\[
\sum_{1 \leq j_1, \ldots, j_k \leq n} b_{j_1,\ldots,j_k} x_{j_1}\cdot\ldots\cdot x_{j_k},
\]
where $b_{j_1,\ldots,j_k} =D(A_{j_1}, \ldots,A_{j_k})$ is the mixed discriminant of $A_{j_1}, \ldots,A_{j_k}$. In particular, $b_{j_1,\ldots,j_k} \geq 0$.
\end{lemma}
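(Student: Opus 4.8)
The statement contains two assertions: that $\det\left(\sum_i x_i A_i\right)$ is a homogeneous polynomial of degree $k$ whose symmetric coefficients are the mixed discriminants $D(A_{j_1},\ldots,A_{j_k})$, and that these coefficients are nonnegative. I would dispatch the structural claim first and reserve the nonnegativity for the end, as it is the only part carrying real content. For the structure, the key observation is that each entry of the matrix $\sum_{i=1}^n x_i A_i$ is a linear form in $x_1,\ldots,x_n$, while the Leibniz expansion of a $k\times k$ determinant is a sum of products of exactly $k$ entries. Hence $x \mapsto \det\left(\sum_i x_i A_i\right)$ is a homogeneous polynomial of degree exactly $k$, and can therefore be written as $\sum_{1\le j_1,\ldots,j_k\le n} b_{j_1,\ldots,j_k}\, x_{j_1}\cdots x_{j_k}$ with coefficients that may be taken symmetric in their indices.

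To identify these coefficients, I would invoke polarization. The determinant, viewed as a function on the space of symmetric $k\times k$ matrices, is a homogeneous polynomial of degree $k$, so it admits a unique symmetric $k$-linear form $D$ with $D(A,\ldots,A) = \det A$, namely
\[
D(A_1,\ldots,A_k) = \frac{1}{k!}\frac{\partial^k}{\partial t_1\cdots\partial t_k}\Big|_{t=0}\det(t_1 A_1 + \cdots + t_k A_k);
\]
this $D$ is by definition the mixed discriminant. Expanding $\det\left(\sum_i x_i A_i\right) = D\left(\sum_i x_i A_i,\ldots,\sum_i x_i A_i\right)$ by multilinearity then produces exactly $\sum_{j_1,\ldots,j_k} D(A_{j_1},\ldots,A_{j_k})\, x_{j_1}\cdots x_{j_k}$, so that $b_{j_1,\ldots,j_k} = D(A_{j_1},\ldots,A_{j_k})$, as claimed.

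The genuine obstacle is the nonnegativity, and I would stress why it is not automatic: although $\sum_i x_i A_i$ is positive semidefinite whenever all $x_i\ge 0$, so that the polynomial is nonnegative on the positive orthant, this alone does not force its coefficients to be nonnegative. Instead I would exploit the multilinearity of $D$ together with the spectral theorem. Writing each positive semidefinite matrix as a nonnegative combination of rank-one projections, $A_i = \sum_l u_{i,l} u_{i,l}^T$, multilinearity reduces the claim to showing $D(u_1 u_1^T,\ldots,u_k u_k^T)\ge 0$ for arbitrary vectors $u_1,\ldots,u_k \in \R^k$. For rank-one matrices this is a direct computation: with $U = [u_1,\ldots,u_k]$ one has $\sum_i t_i u_i u_i^T = U\,\mathrm{diag}(t_1,\ldots,t_k)\,U^T$, hence
\[
\det\Big(\sum_{i=1}^k t_i u_i u_i^T\Big) = (\det U)^2\, t_1\cdots t_k.
\]
Reading off the coefficient of $t_1\cdots t_k$ and comparing with the polarization formula gives $D(u_1 u_1^T,\ldots,u_k u_k^T) = (\det U)^2/k! \ge 0$, and summing over the rank-one pieces yields $D(A_{j_1},\ldots,A_{j_k})\ge 0$. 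This reduction to the rank-one case is the step I expect to carry the weight of the argument; everything else is bookkeeping around the definition of the mixed discriminant.
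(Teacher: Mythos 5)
Your proof is correct. Note, however, that the paper does not actually prove this lemma at all: it is quoted as a standard fact, with the proof outsourced to Bapat's paper (Lemmas 1 and 2(vi) of \cite{Bap}). So what you have done is supply the missing self-contained argument, and it is the standard one: the polarization formula $D(A_1,\ldots,A_k) = \frac{1}{k!}\partial_{t_1}\cdots\partial_{t_k}\big|_{t=0}\det(t_1A_1+\cdots+t_kA_k)$ defines the mixed discriminant (this is essentially Bapat's definition as well), multilinearity gives the expansion of $\det\left(\sum_i x_i A_i\right)$ with symmetric coefficients $D(A_{j_1},\ldots,A_{j_k})$, and nonnegativity reduces by the spectral theorem to the rank-one identity $\det\left(\sum_i t_i u_iu_i^T\right) = (\det U)^2\, t_1\cdots t_k$, whence $D(u_1u_1^T,\ldots,u_ku_k^T) = (\det U)^2/k! \geq 0$. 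All the normalizations are consistent: your $D$ satisfies $D(A,\ldots,A)=\det A$, which is exactly what the lemma's expansion requires, and your identification of the coefficient of $t_1\cdots t_k$ (namely $k!\,D$) is right. You correctly flag the one genuine subtlety, that nonnegativity of the polynomial on the positive orthant does not by itself give nonnegativity of coefficients, and your rank-one reduction handles degenerate (linearly dependent) vectors automatically since then $\det U = 0$. Nothing is missing.
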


\begin{lemma}\label{lm:detlogconvex}
Let $v_1,\ldots,v_n$ be vectors in $\R^k$. Then the function
\[
(t_1,\ldots,t_n) \mapsto \log\det (\sum e^{t_i}v_iv_i^T)
\]
is convex on $\R^n$.
\end{lemma}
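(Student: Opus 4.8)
The plan is to combine the algebraic expansion of the determinant furnished by Lemma~\ref{lm:detchaos} with the elementary but crucial fact that a sum of log-convex functions is again log-convex. Setting $A_i = v_iv_i^T$, which are symmetric positive semidefinite, and substituting $x_i = e^{t_i}$, Lemma~\ref{lm:detchaos} yields
\[
\det\left(\sum_{i=1}^n e^{t_i}v_iv_i^T\right) = \sum_{1\le j_1,\ldots,j_k\le n} b_{j_1,\ldots,j_k}\, e^{t_{j_1}+\cdots+t_{j_k}},
\]
where all the mixed discriminants $b_{j_1,\ldots,j_k}$ are nonnegative. Thus the determinant is exhibited as a nonnegative linear combination of exponentials of linear forms in $t$, and the whole convexity statement will follow by soft arguments from this representation.

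First I would observe that each individual summand $b_{j_1,\ldots,j_k}\, e^{t_{j_1}+\cdots+t_{j_k}}$ with a strictly positive coefficient is log-convex, since its logarithm equals $\log b_{j_1,\ldots,j_k} + (t_{j_1}+\cdots+t_{j_k})$, which is an affine, hence convex, function of $t$ (terms with vanishing coefficient may simply be discarded). Next I would invoke the standard fact that the class of log-convex functions is closed under addition; this follows in one line from H\"older's inequality, for if $g,h$ satisfy $g(\lambda x+(1-\lambda)y)\le g(x)^\lambda g(y)^{1-\lambda}$ and likewise for $h$, then $(g+h)(\lambda x+(1-\lambda)y)\le (g+h)(x)^\lambda (g+h)(y)^{1-\lambda}$. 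Applying this to the finite sum above shows that the determinant itself is a log-convex function of $t$, which is precisely the assertion that $\log\det\left(\sum e^{t_i}v_iv_i^T\right)$ is convex.

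The only point deserving a word of care is the degenerate case in which $v_1,\ldots,v_n$ fail to span $\R^k$: then the determinant vanishes identically, the function equals $-\infty$ everywhere, and convexity holds vacuously. When the $v_i$ do span, the matrix $\sum e^{t_i}v_iv_i^T$ is positive definite for every $t$, so the determinant is strictly positive and its logarithm is finite. I do not anticipate a genuine obstacle here: once Lemma~\ref{lm:detchaos} has delivered the nonnegativity of the coefficients, the argument reduces entirely to the stability of log-convexity under sums, and the only real bookkeeping is to keep the multi-index expansion transparent.
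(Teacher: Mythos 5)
Your proposal is correct and takes essentially the same route as the paper: both expand the determinant via Lemma~\ref{lm:detchaos} into a nonnegative linear combination of exponentials of affine functions of $t$ and then conclude by H\"older's inequality --- the paper applies H\"older directly to the whole sum, while you package the very same step as the closure of log-convex functions under addition. Your additional remark handling the degenerate case where the $v_i$ do not span $\R^k$ is a harmless refinement that the paper omits.
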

\begin{proof}
By Lemma \ref{lm:detchaos}, the function $f(t_1,\ldots,t_n) = \det (\sum e^{t_i}v_iv_i^T)$ is of the form
\[
f(t_1,\ldots,t_n) = \sum_{1 \leq j_1,\ldots, j_k \leq n} b_{j_1\ldots,j_k}e^{t_{j_1}+\ldots+t_{j_k}},
\]
for some nonnegative $b_{j_1\ldots,j_k}$. By H\"older's inequality,
\[
f(\lambda s + (1-\lambda)t) \leq f(s)^\lambda f(t)^{1-\lambda},
\]
which finishes the proof.
\end{proof}

\subsection{Proof of Theorem \ref{thm:Bineql1}}

Thanks to Lemma \ref{lm:volformula}, it suffices to show that the function
\[
\E\left[\det(\sum e^{t_i}Y_i v_iv_i^T)\right]^{-1/2} = \int_{(0,\infty)^n} \left[\det(\sum e^{t_i}y_i v_iv_i^T)\right]^{-1/2} e^{-\sum y_i} \dd y.
\]
is log-concave.
We do the same change of variables $y_i = e^{s_i}$ as in \cite{ENT} in the proof of the B-inequality for the exponential measure (Theorem 14). This gives
\[
\int_{\R^n} \left[\det(\sum e^{t_i+s_i} v_iv_i^T)\right]^{-1/2} e^{-\sum (e^{s_i}-s_i)} \dd y.
\] 
By Lemma \ref{lm:detlogconvex} the integrand is a log-concave function of $(s,t)$ on $\R^{2n}$ and by virtue of the Pr\'ekopa-Leindler inequality its marginal is also log-concave. \hfill$\square$

\section{Strong B-property}

We say that a Borel measure $\mu$ on $\R^n$ satisfies the strong B-inequality if for every symmetric convex set $K$ in $\R^n$ the function
\[
(t_1,\ldots,t_n) \mapsto \mu(\text{diag}(e^{t_1},\ldots,e^{t_n})K)
\]
is log-concave on $\R^n$. Nontrivial examples of such measures include standard Gaussian measure and the product symmetric exponential measure (see \cite{CFM} and \cite{ENT}). We remark that it is not true that every symmetric log-concave measure satisfies the strong B-inequality (see also \cite{CR}). Take a uniform measure $\mu$ on the parallelogram 
\[
K = \text{conv}\{(-1,-2),(-1,-1),(1,1),(1,2)\}\]
in $\R^2$. Let $K_t = \text{diag}(1,e^t)K$ and consider the function $f(t) = \log \mu(K_t) = \log\frac{| K_t \cap K|}{|K|}$. 
Clearly, $\max f = f(0) = 0$. Moreover, $\lim_{t\to -\infty} f(t) = -\infty$ (since $K_t \cap K$ converges to the interval $[-\frac{1}{3},\frac{1}{3}]\times \{0\}$) and $\lim_{t\to \infty} f(t) > -\infty$ (since $K_t \cap K$ converges to the parallelogram $\text{conv}\{(-\frac{1}{3},-\frac{2}{3}),(-\frac{1}{3},0),(\frac{1}{3},0),(\frac{1}{3},\frac{2}{3})\}$). Such a function cannot be concave.

\begin{center}
\begingroup%
  \makeatletter%
  \providecommand\color[2][]{%
    \errmessage{(Inkscape) Color is used for the text in Inkscape, but the package 'color.sty' is not loaded}%
    \renewcommand\color[2][]{}%
  }%
  \providecommand\transparent[1]{%
    \errmessage{(Inkscape) Transparency is used (non-zero) for the text in Inkscape, but the package 'transparent.sty' is not loaded}%
    \renewcommand\transparent[1]{}%
  }%
  \providecommand\rotatebox[2]{#2}%
  \newcommand*\fsize{\dimexpr\f@size pt\relax}%
  \newcommand*\lineheight[1]{\fontsize{\fsize}{#1\fsize}\selectfont}%
  \ifx\svgwidth\undefined%
    \setlength{\unitlength}{375bp}%
    \ifx\svgscale\undefined%
      \relax%
    \else%
      \setlength{\unitlength}{\unitlength * \real{\svgscale}}%
    \fi%
  \else%
    \setlength{\unitlength}{\svgwidth}%
  \fi%
  \global\let\svgwidth\undefined%
  \global\let\svgscale\undefined%
  \makeatother%
  \begin{picture}(1,0.5)%
    \lineheight{1}%
    \setlength\tabcolsep{0pt}%
    \put(0,0){\includegraphics[width=\unitlength,page=1]{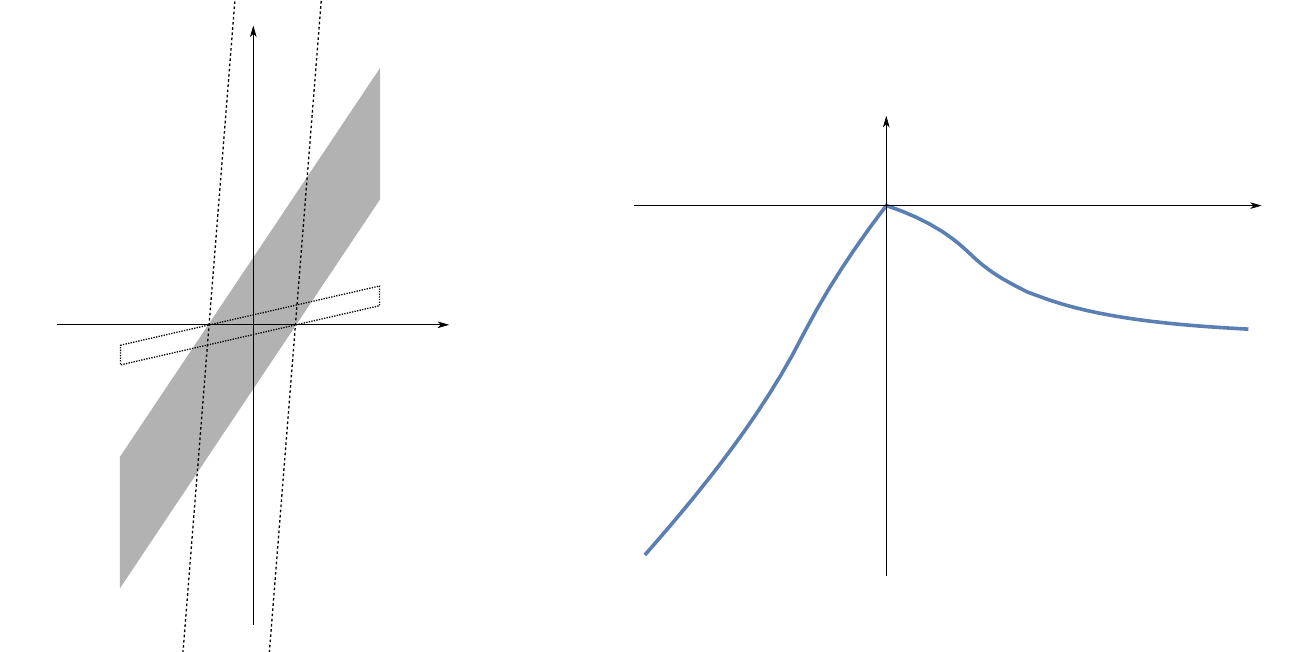}}%
    \put(-0.1358112,0.39649582){\color[rgb]{0,0,0}\makebox(0,0)[lt]{\begin{minipage}{0.22149075\unitlength}\raggedright \end{minipage}}}%
    \put(0.38282355,0.355479){\color[rgb]{0,0,0}\makebox(0,0)[lt]{\begin{minipage}{0.13034229\unitlength}\raggedright \end{minipage}}}%
    \put(0.24920876,0.47839418){\color[rgb]{0,0,0}\makebox(0,0)[lt]{\begin{minipage}{0.06734251\unitlength}\raggedright $K_2$\end{minipage}}}%
    \put(0.30112169,0.40385362){\color[rgb]{0,0,0}\makebox(0,0)[lt]{\begin{minipage}{0.15170771\unitlength}\raggedright $K=K_0$\end{minipage}}}%
    \put(0.29656427,0.2853606){\color[rgb]{0,0,0}\makebox(0,0)[lt]{\begin{minipage}{0.11200525\unitlength}\raggedright $K_{-2}$\end{minipage}}}%
    \put(0.86624217,0.23614044){\color[rgb]{0,0,0}\makebox(0,0)[lt]{\begin{minipage}{0.06734251\unitlength}\raggedright $f(t)$\end{minipage}}}%
  \end{picture}%
\endgroup%

\end{center}

\section{Another formula for volume of sections}

Using the same probabilistic representation of the double-sided exponential distribution, we shall derive a complementary formula to the one from Lemma \ref{lm:volformula}.

\begin{lemma}\label{lm:volformula-2}
Let $H$ be a $k$-dimensional subspace of $\R^n$ spanned by vectors $u_1,\ldots,u_k$ in $\R^n$ and let $v_1,\ldots,v_n$ be the column vectors of the $k\times n$ matrix formed by taking $u_1,\ldots, u_k$ as its rows. Then for any positive numbers $a_1,\ldots,a_n$ we have
\begin{align*}
\vol_{H}& \left( \mathrm{diag}(a_1,\ldots,a_n) B_1^n \cap H \right)  \\
&= \frac{2^{k}}{k!\cdot \pi^{(n-k)/2}} \sqrt{\det\left(\sum_{i=1}^n v_iv_i^T\right)} \E \left[\frac{1}{\sqrt{\prod_{i=1}^n Y_i}} \frac{1}{\sqrt{\det\left(\sum_{i=1}^n \frac{1}{Y_ia_i^2}v_iv_i^T\right)}} \right],
\end{align*}
where $Y_1,\ldots,Y_n$ are i.i.d. standard one sided exponential random variables.
\end{lemma}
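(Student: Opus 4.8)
The plan is to parameterise the $k$-dimensional subspace $H$ directly by $\R^k$, rather than passing through its orthogonal complement as in Lemma~\ref{lm:volformula}. Let $U$ be the $k\times n$ matrix whose rows are $u_1,\dots,u_k$, so that its columns are $v_1,\dots,v_n$. The map $s\mapsto U^Ts=\sum_{j=1}^k s_ju_j$ is a linear bijection from $\R^k$ onto $H$, and it scales $k$-dimensional volume by the factor $\sqrt{\det(UU^T)}$ (note that this factor is genuinely nontrivial, as the $u_j$ are only assumed to span $H$, not to be orthonormal). Since $(UU^T)_{ij}=\scal{u_i}{u_j}=\sum_{\ell}(v_\ell)_i(v_\ell)_j$, we have $UU^T=\sum_{\ell=1}^n v_\ell v_\ell^T$, which is exactly the determinant factor appearing in the statement. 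Writing $x=U^Ts$, the gauge of $\mathrm{diag}(a_1,\dots,a_n)B_1^n$ pulls back to the norm $M(s)=\sum_{i=1}^n |\scal{v_i}{s}|/a_i$ on $\R^k$ (a genuine norm because the $v_i$ span $\R^k$, $H$ being $k$-dimensional), so that
\[
\vol_H\left(\mathrm{diag}(a_1,\dots,a_n)B_1^n\cap H\right)=\sqrt{\det\left(\textstyle\sum_{i} v_iv_i^T\right)}\,\vol_k(\{M\le 1\}).
\]

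Next I would apply the integral representation quoted in the proof of Lemma~\ref{lm:volformula}, now in ambient space $\R^k$ with the subspace taken to be the whole space (codimension $0$, so no $\varepsilon$-strips appear), with $p=1$ and gauge $N=M$. Since $\Gamma(1+k)=k!$, this gives
\[
\vol_k(\{M\le 1\})=\frac{1}{k!}\int_{\R^k}e^{-\sum_{i=1}^n|\scal{v_i}{s}|/a_i}\dd s.
\]
Then I would invoke the same Gaussian mixture representation of the two-sided exponential as before: with $R=\sqrt{2Y}$ for $Y$ a standard one-sided exponential one gets the one-dimensional identity $e^{-|u|}=\E\big[\tfrac{1}{\sqrt{\pi Y}}e^{-u^2/(4Y)}\big]$. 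Applying this to each of the $n$ factors $e^{-|\scal{v_i}{s}|/a_i}$ with independent copies $Y_1,\dots,Y_n$ rewrites the integrand as
\[
\E\left[\frac{1}{\pi^{n/2}\sqrt{\prod_{i}Y_i}}\,\exp\Big(-\tfrac14\, s^T\big(\textstyle\sum_i \tfrac{1}{Y_ia_i^2}v_iv_i^T\big)s\Big)\right].
\]

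Finally, by Tonelli's theorem (all quantities are nonnegative) I would interchange the expectation with the integral over $s$ and evaluate the resulting Gaussian integral: for the matrix $Q=\sum_i \frac{1}{Y_ia_i^2}v_iv_i^T$ one has $\int_{\R^k}e^{-\frac14 s^TQs}\dd s=(4\pi)^{k/2}/\sqrt{\det Q}$. Collecting the constant $(4\pi)^{k/2}/\pi^{n/2}=2^k/\pi^{(n-k)/2}$ together with the $1/k!$ and the Jacobian $\sqrt{\det(\sum_i v_iv_i^T)}$ produces precisely the claimed formula. The computation is essentially bookkeeping; the only points requiring care are checking that $Q$ is almost surely positive definite (which holds because the $v_i$ span $\R^k$, so $\sum_i \tfrac{1}{Y_ia_i^2}v_iv_i^T\succ0$ whenever every $Y_i\in(0,\infty)$), the justification of the interchange of $\E$ and $\int$, and tracking the powers of $2$ and $\pi$ through the Gaussian mixture and the Gaussian integral.
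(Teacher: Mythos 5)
Your proposal is correct and follows essentially the same route as the paper: the paper also parameterises $H$ by $\R^k$ via the injection $Ty=[\scal{y}{v_i}]_{i=1}^n$ (your $U^Ts$), extracts the Jacobian $\sqrt{\det(T^TT)}=\sqrt{\det(\sum_i v_iv_i^T)}$, writes $\vol_k(\{M\le 1\})=\frac{1}{k!}\int_{\R^k}e^{-M}$, and then applies the Gaussian mixture identity $e^{-|x|}=\E\bigl[\tfrac{1}{\sqrt{\pi Y}}e^{-x^2/(4Y)}\bigr]$ followed by Fubini and the Gaussian integral. Your bookkeeping of the constants ($(4\pi)^{k/2}/\pi^{n/2}=2^k/\pi^{(n-k)/2}$) matches the paper's, and your explicit checks of positive definiteness and the Tonelli interchange are points the paper leaves implicit.
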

\begin{proof}
Let 
\[
K = \left\{y \in \R^k, \ \sum_{i=1}^n a_i^{-1}|\scal{y}{v_i}| \leq 1\right\}.\]
Note that the set $\mathrm{diag}(a_1,\ldots,a_n)B_1^n \cap H$ is the image of $K$ under the linear injection $T: \R^k\to\R^n$ given by $Ty = [\scal{y}{v_i}]_{i=1}^n$, $y \in \R^k$, whose image is $H$. Therefore,
\begin{align*}
\vol_{H}\left( \mathrm{diag}(a_1,\ldots,a_n)B_1^n \cap H \right) &= \sqrt{\det(T^TT)}\vol_k(K) \\
&= \sqrt{\det\left(\sum_{i=1}^n v_iv_i^T\right)}\vol_k(K).
\end{align*}
Let us develop the formula for the volume of $K$. Plainly, $\|y\|_K = \sum_{i=1}^n a_i^{-1}|\scal{y}{v_i}|$, thus
\[
\vol_k(K) = \frac{1}{k!}\int_{\R^k} e^{-\|y\|_K} \dd y = \frac{1}{k!}\int_{\R^k} \prod_{i=1}^ne^{-a_i^{-1}|\scal{y}{v_i}|} \dd y.
\]
Using as in the proof of Lemma \ref{lm:volformula} that a standard symmetric exponential random variable with density $\frac{1}{2}e^{-|x|}$ has the same distribution as $\sqrt{2Y}G$, where $Y \sim \text{Exp}(1)$ and $G \sim N(0,1)$ are independent, we can write
\[
\frac{1}{2}e^{-|x|} = \E \frac{1}{\sqrt{2\pi}\sqrt{2Y}}e^{-\frac{x^2}{4Y}}.
\]
Taking i.i.d. copies $Y_1,\ldots,Y_n$ of $Y$, we obtain
\begin{align*}
\vol_k(K) &= \frac{1}{k!}\int_{\R^k} \left(\E_Y \prod_{i=1}^n \frac{1}{\sqrt{\pi}\sqrt{Y_i}}e^{-\frac{\scal{y}{v_i}^2}{4Y_ia_i^2}} \right) \dd y \\
&= \frac{2^{k/2}}{k!\cdot \sqrt{\pi}^{n-k}} \E_Y \left[\frac{1}{\sqrt{\prod_{i=1}^n Y_i}} \int_{\R^k} \frac{1}{\sqrt{2\pi}^k} e^{-\frac{1}{2}\scal{\left(\sum_{i=1}^n \frac{1}{2Y_ia_i^2}v_iv_i^T\right)y}{y}}\dd y \right] \\
&=  \frac{2^{k/2}}{k!\cdot \pi^{(n-k)/2}} \E_Y \left[\frac{1}{\sqrt{\prod_{i=1}^n Y_i}} \frac{1}{\sqrt{\det\left(\sum_{i=1}^n \frac{1}{2Y_ia_i^2}v_iv_i^T\right)}} \right].
\end{align*}
Plugging this back to the formula for the volume of the section $\mathrm{diag}(a_1,\ldots,a_n)B_1^n \cap H$ finishes the proof.
\end{proof}

Note that Lemma \ref{lm:volformula-2} uses $k$ dimensional vectors, whereas Lemma \ref{lm:volformula} uses $n-k$ dimensional vectors, where $k$ is the dimension of the section (subspace).

\end{document}